\newtheorem{thm}[equation]{Theorem}
\newtheorem{lem}[equation]{Lemma}
\newtheorem{prop}[equation]{Proposition}
\newtheorem{cor}[equation]{Corollary}
\newtheorem{conj}[equation]{Conjecture}
\newtheorem{rem}[equation]{Remark}
\theoremstyle{definition}
\theoremstyle{remark}
\numberwithin{equation}{section}
\DeclareMathAlphabet{\matheur}{U}{eur}{m}{n}
\newcommand{\NN}{\mathbb{N}}
\newcommand{\ZZ}{\mathbb{Z}}
\newcommand{\QQ}{\mathbb{Q}}
\DeclareMathOperator{\Gal}{Gal}
\DeclareMathOperator{\im}{Im}
\DeclareMathOperator{\re}{Re}
\DeclareMathOperator{\Li}{Li}
\DeclareMathOperator{\Reg}{Reg}
\DeclareMathOperator{\Sym}{Sym}
\mathchardef\pFcomma=\mathcode`, 
\begin{document}

\title[On a non-critical symmetric square $L$-value]{On a non-critical symmetric square $L$-value of the congruent number elliptic curves}

\author{Detchat Samart}
\address{Department of Mathematics, Burapha University, Chonburi, 20131, Thailand} \email{petesamart@gmail.com}


\subjclass[2010]{Primary: 11G40 Secondary: 11G55}

\date{\today}

\begin{abstract}
The congruent number elliptic curves are defined by $E_d: y^2=x^3-d^2x$, where $d\in \NN.$ We give a simple proof of a formula for $L(\Sym^2(E_d),3)$ in terms of the determinant of the elliptic trilogarithm evaluated at some degree zero divisors supported on the torsion points on $E_d(\overline{\QQ})$. 
\end{abstract}

\keywords{Elliptic curve, Symmetric square $L$-function,  Eisenstein-Kronecker series, Elliptic polygarithm}

\maketitle
\section{Introduction}\label{Sec:Intro}
Let $E$ be an elliptic curve defined over $\mathbb{C}$. Then there exist $\tau\in\mathbb{C}$ such that $\im(\tau)>0$ and the following isomorphisms:
\begin{equation}\label{E:isom}
\begin{aligned}
\qquad E(\mathbb{C})\qquad& \tilde{\longrightarrow} & \mathbb{C}/\Lambda\qquad & \tilde{\longrightarrow} & \mathbb{C}^{\times}/q^{\mathbb{Z}}\\
\left(\wp_{\Lambda}(u),\wp_{\Lambda}'(u)\right)\qquad & \longmapsto &  u \pmod \Lambda \qquad & \longmapsto & e^{2\pi i u},
\end{aligned}
\end{equation}
where $\Lambda=\mathbb{Z}+\mathbb{Z}\tau$, and $\wp_{\Lambda}$ is the Weierstrass $\wp$-function. 
Zagier and Gangl \cite[\S 10]{ZG} defined the two functions $\mathcal{L}^E_{3,j}:E(\mathbb{C})\rightarrow\mathbb{R}, j=1,2,$ by 
\begin{align*}
\mathcal{L}^E_{3,1}(P):&=\mathcal{L}_{3,1}^E(x)=\sum_{n=-\infty}^{\infty}\mathcal{L}_3(q^n x),\\
\mathcal{L}^E_{3,2}(P):&=\mathcal{L}^E_{3,2}(x)=\sum_{n=0}^{\infty}J_3(q^n x)+\sum_{n=1}^{\infty}J_3(q^n x^{-1})+\frac{\log^2|x|\log^2|qx^{-1}|}{4\log|q|},
\end{align*}
where $\mathcal{L}_3(z)=\re\left(\Li_3(z)-\log|z|\Li_2(z)+\frac{1}{3}\log^2|z|\Li_1(z)\right),$ $\Li_m(z)=\sum_{n=1}^{\infty}\frac{z^n}{n^m},$ the classical $m^\text{th}$ polylogarithm function, $J_3(x)=\log^2|x|\log|1-x|,$ $q=e^{2\pi i \tau},$ and $x\in\mathbb{C}^{\times}$ is the image of $P$ under the composition of the isomorphisms above. The function $\mathcal{L}^E_{3,1}$ is called the \textit{elliptic trilogarithm}. These two functions serve as higher-dimensional analogues of the \textit{elliptic dilogarithm} $D^E$ and the function $J^{E}$ defined by
\begin{align*}
D^E(x)&=\sum_{n=-\infty}^{\infty}\mathcal{L}_2(q^n x),\\
J^E(x)&=\sum_{n=0}^{\infty}J(q^n x)-\sum_{n=1}^{\infty}J(q^n x^{-1})+\frac{1}{3}\log^2|q|B_3\left(\frac{\log|x|}{\log|q|}\right),
\end{align*}
where $\mathcal{L}_2(z)=\im(\Li_2(z)+\log |z|\log(1-z))$, known as the \textit{Bloch-Wigner dilogarithm}, $J(z)=\log|z|\log|1-z|$, and $B_3(X)=X^3-3X^2/2+X/2$.

Recall that for $a,b\in\mathbb{N}$ the series
\begin{equation*}
K_{a,b}(\tau;u)=\sideset{}{'}\sum_{m,n\in\mathbb{Z}}\frac{e^{2\pi i(n\xi-m\eta)}}{(m\tau+n)^a(m\bar{\tau}+n)^b},
\end{equation*}
where $u=\xi\tau+\eta$ and $\xi,\eta\in\mathbb{R}/\mathbb{Z}$, is called the \textit{Eisenstein-Kronecker series.}  Here and throughout, $\sideset{}{'}\sum$ means  $(m,n)\neq (0,0)$ in the summation. Bloch \cite{Bloch} defined the regulator function $R^E:E(\mathbb{C})\rightarrow \mathbb{R}$ by
\begin{equation*}
R^E(e^{2\pi i u})=\frac{\im(\tau)^2}{\pi}K_{2,1}(\tau;u).
\end{equation*} 
One can extend the functions $D^E,J^E,R^E,\mathcal{L}^E_{3,1}$ and $\mathcal{L}^E_{3,2}$ to the group of divisors on $E(\mathbb{C})$ by linearity. Also, it can be shown that $\re(R^E)=D^E$ and $\im(R^E)=J^E.$ In \cite{GL}, Goncharov and Levin prove the following theorem, formerly known as Zagier's conjecture on $L(E,2)$:
\begin{thm}\label{T:G}
Let $E$ be a modular elliptic curve over $\mathbb{Q}$. Then there exists a divisor $P=\sum n_j(P_j)$ on $E(\bar{\mathbb{Q}})$ satisfying the following conditions:
\begin{itemize}
\item [a)] $$\sum n_j P_j\otimes P_j\otimes P_j =0 \text{ in } \Sym^3(E),$$
\item [b)] For any valuation $v$ of the field $\mathbb{Q}(P)$ generated by the coordinates of the points $P_j$
\begin{equation*}
\sum n_j h_v(P_j)\cdot P_j =0 \text{ on } E,
\end{equation*}
where $h_v$ is the local height associated to the valuation $v$.
\item [c)] For every prime $p$ where $E$ has a split multiplicative reduction, $P$ satisfies a \textit{certain} integrality condition. (see \cite[Thm.~1.1]{GL})
\end{itemize}
Moreover, for such a divisor $P$, 
\begin{equation*}
L(E,2) \sim_{\mathbb{Q}^\times} \pi\cdot D^E(P),
\end{equation*}
where $A\sim_{\mathbb{Q}^\times}B$ means $A=cB$ for some $c\in\mathbb{Q}^\times.$
\end{thm}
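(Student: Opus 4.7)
The plan is to realize $P$ as an integral motivic class $\xi_P \in H^2_{\mathcal{M}}(E,\QQ(2))_{\ZZ}$, compute its Beilinson regulator as a $\QQ^\times$-multiple of $D^E(P)$, and invoke the known form of Beilinson's conjecture for modular elliptic curves to link this regulator to $L(E,2)/\pi$.

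For the construction, I would assign to each $P_j$ a rational function $f_j$ on $E$ whose divisor is a nonzero multiple of $(P_j)-(O)$, and form a Milnor-type combination
\begin{equation*}
\xi_P = \sum_{j,k} n_j n_k \{f_j,f_k\} \in K_2(\QQ(P)(E))\otimes\QQ,
\end{equation*}
where $\QQ(P)$ denotes the field generated by the coordinates of the $P_j$. Modulo Steinberg and decomposable symbols, the obstruction to $\xi_P$ descending to a class in $H^2_{\mathcal{M}}(E,\QQ(2))$ is a cubic tensor in the $P_j$, and its vanishing is exactly condition (a). Condition (b) then kills every tame symbol of $\xi_P$ at a closed point --- the relevant tame symbols are expressible through local heights $h_v(P_j)$ --- so $\xi_P$ descends to $K_2$ of a smooth integral model, and condition (c) removes the residual obstruction at primes of split multiplicative reduction coming from the N\'eron component group, as detailed in Goncharov--Levin.

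Next, the Beilinson regulator of $\{f,g\}$ on $E$, integrated against the invariant $(1,1)$-form, is given by a Bloch-type formula, and for $\xi_P$ it reduces to a $\QQ^\times$-multiple of the elliptic dilogarithm $D^E(P)$ through a careful but conceptually routine manipulation of Eisenstein--Kronecker series. The final step is to invoke Beilinson's conjecture for $E$, which in the modular case is established via the Eisenstein symbol on $X_0(N)$ pushed forward along the modular parametrization $X_0(N)\to E$, combined with a Rankin--Selberg identification of $L(E,2)$ with the resulting period. This yields
\begin{equation*}
L(E,2)\sim_{\QQ^\times}\pi\cdot\Reg(\xi_P)\sim_{\QQ^\times}\pi\cdot D^E(P).
\end{equation*}

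The main obstacle is not the $K$-theoretic setup of $\xi_P$, which is essentially formal once (a)--(c) are in hand, but rather transferring the Beilinson formula from $X_0(N)$ down to $E$: one must verify that the push-forward of Beilinson's Eisenstein symbols spans a $\QQ$-line in $H^2_{\mathcal{M}}(E,\QQ(2))_{\ZZ}$ that actually contains $\xi_P$ up to a $\QQ^\times$-scalar, and track rational constants through the Rankin--Selberg computation carefully enough to retain the explicit factor of $\pi$.
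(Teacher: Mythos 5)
Note first that the paper does not prove Theorem~\ref{T:G} at all: it is quoted verbatim from Goncharov--Levin \cite{GL}, so the only meaningful comparison is with their argument. Your outline names the right ingredients (Bloch's regulator formula relating symbols on $E$ to the elliptic dilogarithm, Beilinson's Eisenstein-symbol classes on modular curves, the Rankin--Selberg period computation), but two of your steps fail as written. First, a rational function $f_j$ whose divisor is a nonzero multiple of $(P_j)-(O)$ exists only when $P_j$ is torsion, whereas the theorem concerns arbitrary algebraic points subject to (a)--(c); Goncharov--Levin instead work with the diamond operation $(f)\diamond(g)$ and a delicate analysis of which divisors it can produce. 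Second, even when the $f_j$ exist, the regulator of $\sum_{j,k} n_j n_k\{f_j,f_k\}$ computes $D^E$ of a diamond-type divisor supported on the differences $P_j-P_k$, not $D^E(P)$; passing from a divisor satisfying (a) and (b) to an honest $K_2$ class whose regulator is $D^E(P)$ is precisely the hard algebraic core of \cite{GL} (their study of the cokernel of $\diamond$, where condition (a) enters, and of tame symbols and local heights, where condition (b) enters), and cannot be treated as ``essentially formal.''

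The more fundamental gap is the direction of your final step. Starting from an \emph{arbitrary} divisor satisfying (a)--(c) and concluding $D^E(P)\sim_{\QQ^\times} L(E,2)/\pi$ requires knowing that $H^2_{\mathcal{M}}(E,\QQ(2))_{\ZZ}$ is one-dimensional, i.e.\ that your class $\xi_P$ lies on the line spanned by the push-forward of Beilinson's element; this is part of Beilinson's conjecture and is not known. You flag exactly this as ``the main obstacle,'' but you offer no way around it, and none is currently available. Goncharov--Levin avoid it by running the argument in the opposite direction: they start from Beilinson's Eisenstein-symbol element on the modular curve, push it forward along the modular parametrization, express the resulting class through symbols of functions on $E$ over $\Qbar$, and then \emph{define} the divisor $P$ from those symbols, checking that it satisfies (a)--(c); Bloch's formula together with Beilinson's theorem then yields $L(E,2)\sim_{\QQ^\times}\pi\cdot D^E(P)$ for that particular $P$. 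This is why the theorem is an existence statement rather than a statement about every divisor satisfying the conditions, and why your proposed route, as it stands, cannot close.
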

There are several numerical results and conjectures relating special values of $L$-series of symmetric powers of an elliptic curve over $\mathbb{Q}$ to higher elliptic polylogarithms including those due to Mestre and Schappacher \cite{MS}, Goncharov \cite{Goncharov}, and Wildeshaus \cite{Wildeshaus}. Inspired by these examples and their numerical experiment, Zagier and Gangl \cite[\S 10]{ZG} formulated the following conjecture, which is an analogue of Theorem~\ref{T:G}:
\begin{conj}\label{C:Zag}
Let $E$ be an elliptic curve over $\mathbb{Q}$. For any $\xi=\sum n_i(P_i) \in \mathbb{Z}[E(\bar{\mathbb{Q}})]$ and any homomorphism $\phi:E(\bar{\mathbb{Q}})\rightarrow \mathbb{Z}$, let $\iota_\phi(\xi)=\sum n_i\phi(P_i)(P_i)$. Also define $$\mathcal{C}_2(E/\mathbb{Q})=\left\langle (f)\diamond(1-f), (P)+(-P), (2P)-2\sum_{T\in E[2]}(P+T) \mid f\in\mathbb{Q}(E), P\in E(\mathbb{Q}) \right\rangle$$ as a subgroup of $\mathbb{Z}[E(\bar{\mathbb{Q}})]^{\Gal(\bar{\mathbb{Q}}/\mathbb{Q})},$ where for $(f)=\sum m_i(P_i)$ and $(g)=\sum n_j(Q_j)$ the diamond operator $\diamond$ is defined by $(f)\diamond (g)=\sum m_i n_j(a_i-b_j).$ If $\iota_\phi(\xi)\in\mathcal{C}_2(E/\mathbb{Q})$ for all homomorphisms $\phi:E(\bar{\mathbb{Q}})\rightarrow \mathbb{Q}$, then $\vec{\mathcal{L}}_3^E(\xi):=(\mathcal{L}_{3,1}^E(\xi),\mathcal{L}_{3,2}^E(\xi))$ belongs to a $2$-dimensional lattice whose covolume is related to $L(\Sym^2(E),3).$
\end{conj}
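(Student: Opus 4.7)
The plan is to recast the conjecture as a Beilinson-type statement: interpret $\vec{\mathcal{L}}_3^E(\xi)$ as the image of a motivic cohomology class under a regulator map, and read the covolume prediction as Beilinson's conjecture for $L(\Sym^2(E),3)$. The $L$-value at $s=3$ is non-critical and, via the Deligne formalism for motives, should correspond to a rank-$2$ piece of motivic cohomology of $E$ with coefficients in $\QQ(3)$, which matches the two-dimensional target lattice. The divisor-theoretic hypothesis $\iota_\phi(\xi) \in \mathcal{C}_2(E/\QQ)$ for every $\phi$ plays the role that conditions (a)--(c) play in Theorem~\ref{T:G}: it is the package of relations needed to lift a naive combination of elliptic trilogarithm values to a genuine element of $H^2_\mathcal{M}(E,\QQ(3))$.

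First I would construct, following the template of Goncharov--Levin \cite{GL} and the trilogarithmic complex ideas of Goncharov \cite{Goncharov}, a homomorphism
\[
\Theta : \bigl\{\, \xi \in \ZZ[E(\overline{\QQ})] : \iota_\phi(\xi) \in \mathcal{C}_2(E/\QQ) \ \forall\, \phi\,\bigr\} \longrightarrow H^2_\mathcal{M}(E,\QQ(3)).
\]
The key point is that the inner condition $\iota_\phi(\xi) \in \mathcal{C}_2$ encodes precisely the Steinberg-type relations at the dilogarithmic level, weighted by the arbitrary linear coordinate $\phi$ on the Mordell--Weil lattice. Varying $\phi$ over $\Hom(E(\overline{\QQ}),\QQ)$ therefore kills all first-order boundary terms obstructing the passage from the trilogarithmic Bloch group $B_3(E)$ into $H^2_\mathcal{M}$. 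This is the trilogarithmic analogue of the Goncharov--Levin construction, in which conditions (a)--(b) kill the corresponding obstructions for $H^1_\mathcal{M}(E,\QQ(2))$.

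Second, I would identify the Beilinson regulator
\[
\mathrm{reg}_\mathcal{B} : H^2_\mathcal{M}(E,\QQ(3)) \longrightarrow H^1_\mathcal{D}(E_{/\RR},\RR(3))
\]
and show that, after choosing a suitable basis of the two-dimensional real target (the $F^0$ and conjugation-split pieces of Deligne cohomology), its values on $\Theta(\xi)$ are exactly $\mathcal{L}_{3,1}^E(\xi)$ and $\mathcal{L}_{3,2}^E(\xi)$. The comparison proceeds by expressing both sides as Kronecker--Eisenstein series $K_{a,b}$ --- extending Bloch's identity $\re R^E = D^E,\ \im R^E = J^E$ to weight $3$ --- and matching coefficients; the explicit formulae of Beilinson--Levin and Wildeshaus \cite{Wildeshaus} make this step tractable. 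Finally, I would invoke Beilinson's conjecture for $L(\Sym^2(E),3)$: as $\xi$ ranges over admissible divisors, the image $\Theta(\xi)$ should span a full $\QQ$-structure of $H^2_\mathcal{M}(E,\QQ(3))$ whose regulator image is commensurable with a rational lattice whose covolume is $L(\Sym^2(E),3)$ up to the appropriate period and a factor in $\QQ^\times$.

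The principal obstacle is that Beilinson's conjecture for the symmetric square at $s=3$ is itself open: neither the surjectivity of the regulator nor the rationality of the resulting ratio is known for general $E/\QQ$, so the conjecture in its stated form cannot be proved unconditionally. Even granting Beilinson, the construction of $\Theta$ is delicate, since the trilogarithmic Steinberg relations are strictly stronger than the dilogarithmic ones, and one must track an analogue of the local integrality condition (c) of Theorem~\ref{T:G} at primes of bad reduction. My expectation is that a clean proof requires either (i) a detailed comparison with Wildeshaus's motivic elliptic polylogarithm, whose Hodge realization would supply the regulator identification for free, or (ii) a direct shuffle-product computation in a trilogarithmic elliptic Bloch complex showing that $\vec{\mathcal{L}}_3^E$ factors through the quotient cut out by the $\mathcal{C}_2$-condition.
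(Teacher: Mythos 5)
The statement you were asked about is labelled as a \emph{conjecture} in the paper (it is Zagier and Gangl's conjecture from \cite{ZG}), and the paper supplies no proof of it: it remains open, and your closing assessment --- that any proof along these lines is conditional on Beilinson's conjecture for $L(\Sym^2(E),3)$, which is itself unknown --- is the correct conclusion. Your outline (lift $\xi$ to a class in $H^2_\mathcal{M}(E,\QQ(3))$ using the $\mathcal{C}_2$-condition as the trilogarithmic analogue of the Goncharov--Levin conditions in Theorem~\ref{T:G}, identify the regulator with $\vec{\mathcal{L}}_3^E$ via Kronecker--Eisenstein series, then invoke Beilinson) is a reasonable description of the expected conceptual framework, but it is a research programme rather than a proof, and you are right to flag the construction of $\Theta$ and the integrality conditions at bad primes as genuine unresolved difficulties.

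What the paper actually does is entirely different in character and much more modest in scope: it proves a single explicit instance of the expected phenomenon, Theorem~\ref{T:main}, for the CM curves $E_d: y^2=x^3-d^2x$, and does so with no motivic input whatsoever. The ingredients are (i) closed lattice-sum formulas \eqref{E:L31}--\eqref{E:L32} expressing $\mathcal{L}^E_{3,1}$ and $\mathcal{L}^E_{3,2}$ as Eisenstein--Kronecker series, (ii) the factorization $L(\Sym^2(E),s)=L(g,s)L(\chi_{-4},s-1)$ of Theorem~\ref{T:CS}, obtained from the Hecke character of $\QQ(\sqrt{-1})$ attached to the CM curve together with the Coates--Schmidt result \cite{CS}, and (iii) elementary parity manipulations of the resulting lattice sums over $\ZZ[i]$ (Lemma~\ref{L:1}) to evaluate each entry of the $2\times 2$ determinant as an explicit rational combination of $L(g,3)$ and $L(\chi_{-4},2)$. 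The determinant then collapses to $-\tfrac{43}{2}L(g,3)L(\chi_{-4},2)$. If your goal is to produce something provable rather than to articulate the general framework, the CM case is where the conjecture becomes accessible, precisely because complex multiplication converts both sides into computable lattice sums and sidesteps every motivic question you correctly identified as open.
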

They also verified numerically that if $E$ is the conductor $37$ elliptic curve defined by $y^2-y=x^3-x$, $E\cong \mathbb{C}/(\mathbb{Z}+\mathbb{Z}\tau)$, and 
\begin{align*}
\eta_4&=3(4P)-13(3P)+18(2P)-3(P)-5(O),\\
\eta_6&=2(6P)-45(3P)+60(2P)+93(P)-110(O),
\end{align*}
where $P=[0,0]$, then 
\begin{equation}\label{E:Zag}
\Reg_3(E):=\begin{vmatrix}
\mathcal{L}^E_{3,1}(\eta_4) & \mathcal{L}^E_{3,2}(\eta_4) \\ 
\mathcal{L}^E_{3,1}(\eta_6) & \mathcal{L}^E_{3,2}(\eta_6)
\end{vmatrix}\stackrel{?}=-\frac{37^3}{4}\im(\tau)^2 L(\Sym^2(E),3).
\end{equation}
(Note that the negative sign in the above identity is missing in \cite{ZG}.) Recall from \cite{CS} that $L(\Sym^2(E),s)$ satisfies the functional equation
\begin{equation*}
\Lambda(\Sym^2(E),s)=\Lambda(\Sym^2(E),3-s),
\end{equation*}
where $\Lambda(\Sym^2(E),s)=C^{s/2}\pi^{-s/2}\Gamma\left(\frac{s}{2}\right)(2\pi)^{-s}\Gamma(s)L(\Sym^2(E),s)$, and $C$ is the conductor of the Galois representation associated to the symmetric square of the Tate module of $E$.
Therefore, \eqref{E:Zag} can be rephrased as 
\begin{equation*}
\Reg_3(E) \stackrel{?}= 2\pi^4 \im(\tau)^2 L''(\Sym^2(E),0).
\end{equation*}
This conjecture is consistent with a special case of \cite[Conj.6.8]{Goncharov}, namely, for any elliptic curve $E$ over $\mathbb{Q}$ there exist degree zero divisors $\xi_1$ and $\xi_2$ on $E(\bar{\mathbb{Q}})$ such that
\begin{equation*}
\begin{vmatrix}
\re\left(K_{1,3}(\tau;\xi_1)\right) & K_{2,2}(\tau;\xi_1) \\ 
\re\left(K_{1,3}(\tau;\xi_2)\right) & K_{2,2}(\tau;\xi_2)
\end{vmatrix}\stackrel{?}\sim_{\mathbb{Q}^\times}\frac{\pi^6}{\im(\tau)^4}L''(\Sym^2(E),0).
\end{equation*}
The relationship between the determinant above and the one in \eqref{E:Zag} was established in \cite[\S 4]{Samart} and can be stated as follows:
\begin{prop}
Let $E$ be an elliptic curve over $\mathbb{C}$ and suppose that $E\cong \mathbb{C}/(\mathbb{Z}+\mathbb{Z}\tau)$. If $\xi_1$ and $\xi_2$ are divisors of degree zero on $E$, then 
\begin{equation*}
\begin{vmatrix}
\mathcal{L}^E_{3,1}(\xi_1) & \mathcal{L}^E_{3,2}(\xi_1) \\ 
\mathcal{L}^E_{3,1}(\xi_2) & \mathcal{L}^E_{3,2}(\xi_2)
\end{vmatrix}=-\frac{2\im(\tau)^6}{\pi^2}\begin{vmatrix}
\re\left(K_{1,3}(\tau;\xi_1)\right) & K_{2,2}(\tau;\xi_1) \\ 
\re\left(K_{1,3}(\tau;\xi_2)\right) & K_{2,2}(\tau;\xi_2)
\end{vmatrix},
\end{equation*}
where $K_{a,b}(\tau;\xi)=\sum_{P\in E}n_P K_{a,b}(\tau;u_P)$ if $\xi=\sum_{P\in E}n_P (P)$ and $u_P$ is the image of $P$ in $\mathbb{C}/(\mathbb{Z}+\mathbb{Z}\tau)$.
\end{prop}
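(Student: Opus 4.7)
The plan is to establish, for every degree-zero divisor $\xi$ on $E(\mathbb{C})$, explicit $\mathbb{R}$-linear formulas of the shape
\[
\mathcal{L}^E_{3,j}(\xi)=\alpha_j(\tau)\,\re\!\left(K_{1,3}(\tau;\xi)\right)+\beta_j(\tau)\, K_{2,2}(\tau;\xi),\qquad j=1,2,
\]
in which the coefficients $\alpha_j(\tau),\beta_j(\tau)$ depend only on $\tau$ and not on $\xi$. Once such formulas are in hand, the claimed identity becomes nothing more than the $2\times 2$ determinant of this change of basis applied to $\xi_1,\xi_2$, so the proportionality constant is forced to be $\alpha_1\beta_2-\alpha_2\beta_1$, and verifying that this equals $-2\im(\tau)^6/\pi^2$ is a short direct computation. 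The entire proposition thus reduces to producing the two linear formulas with the correct coefficients.

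To obtain these formulas I would work on the universal cover $\mathbb{C}$ of $E$. For each point $P_i$ appearing in $\xi=\sum n_i(P_i)$, write its lift as $u_i=s_i\tau+t_i$ with $s_i,t_i\in\mathbb{R}/\mathbb{Z}$, and $x_i=e^{2\pi iu_i}$. Expanding $\Li_m(q^n x_i)$ and $J_3(q^n x_i)=\log^2|q^n x_i|\log|1-q^n x_i|$ as power series and summing over $n\in\mathbb{Z}$ produces double series indexed by $(m,n)\in\mathbb{Z}^2$ after the degree-zero hypothesis on $\xi$ is used to cancel the $u$-independent divergent pieces and to justify the interchange of order of summation. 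The resulting lattice sums have exactly the shape of the Kronecker--Eisenstein series $K_{1,3}$ and $K_{2,2}$, and matching them term-by-term with the defining series of $K_{a,b}(\tau;u)$ reads off $\alpha_j(\tau)$ and $\beta_j(\tau)$. This Fourier-expansion route is the one carried out in \cite[\S 4]{Samart}, which I would follow, extracting the constants from that computation.

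The main obstacle is careful bookkeeping of two things: the explicit polynomial boundary term $\log^2|x|\log^2|qx^{-1}|/(4\log|q|)$ in $\mathcal{L}^E_{3,2}$, and the correct powers of $\im(\tau)$ and $\pi$ in each coefficient. The boundary term is precisely what renders the two $J_3$-sums in $\mathcal{L}^E_{3,2}$ simultaneously convergent, and after the Fourier expansion it contributes nontrivially to $\beta_2(\tau)$; mishandling it would corrupt the determinant. The powers of $\im(\tau)$ appear because the Kronecker--Eisenstein weights $(m\tau+n)^{-a}(m\bar\tau+n)^{-b}$ are asymmetric in $\tau$ and $\bar\tau$, whereas the Fourier coefficients of lattice sums of polylogarithms naturally come out in the symmetric form $|m\tau+n|^{-2s}$; writing $\bar\tau=\tau-2i\im(\tau)$ to pass between the two shapes introduces one factor of $\im(\tau)$ at each step, and the total count across the $2\times 2$ minor must come out to $\im(\tau)^6$. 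Once these points are tracked, evaluating $\alpha_1\beta_2-\alpha_2\beta_1=-2\im(\tau)^6/\pi^2$ is mechanical.
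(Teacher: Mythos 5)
Your route is essentially the paper's own: the paper proves this proposition (via \cite[\S 4]{Samart}) precisely by expressing $\mathcal{L}^E_{3,1}$ and $\mathcal{L}^E_{3,2}$ through the Kronecker--Eisenstein expansions \eqref{E:L31}--\eqref{E:L32} and then doing the $2\times 2$ change-of-basis computation you describe, since $\re\bigl((m\tau+n)^2\bigr)=|m\tau+n|^2-2m^2\im(\tau)^2$ gives, for degree-zero $\xi$ (so that the constant $\log^3\lvert q\rvert/120$ cancels), $\mathcal{L}^E_{3,1}(\xi)=\frac{2\im(\tau)^3}{3\pi}\bigl(K_{2,2}(\tau;\xi)-\re K_{1,3}(\tau;\xi)\bigr)$ and $\mathcal{L}^E_{3,2}(\xi)=\frac{\im(\tau)^3}{\pi}\bigl(2\re K_{1,3}(\tau;\xi)+K_{2,2}(\tau;\xi)\bigr)$, whose coefficient determinant is indeed $-2\im(\tau)^6/\pi^2$. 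Your proposal is correct and takes the same approach, with only the explicit extraction of these coefficients left implicit.
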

The main result in this paper concerns a symmetric square $L$-value of the congruent number elliptic curves, which are defined by
\begin{equation}\label{E:CNEC}
E_d: y^2=x^3-d^2x, \quad d\in\NN.
\end{equation}
These curves play a crucial role in the study of the congruent number problem\footnote{A square-free positive integer $n$ is called a \textit{congruent number} if it is the area of a right triangle whose all sides are rational numbers. The congruent number problem asks if  there is an algorithm for determining whether any given number is congruent in a finite number of steps.}, which is one of the oldest unsolved problems in number theory. More precisely, assuming the Birch and Swinnerton-Dyer conjecture, it can be proven that a square-free positive integer $d$ is congruent if and only if $L(E_d,1)=0$ (see, for example, \cite{Koblitz}). Some useful facts about $E_d$ include $E_d\cong \mathbb{C}/(\mathbb{Z}+\mathbb{Z}\sqrt{-1})$, that $E_d$ has complex multiplication by $\ZZ[\sqrt{-1}]$ and that $E_d$ is a quadratic twist of $E_1$. We will give a rigorous proof of a formula for $L(\Sym^2(E_d),3)$, which provides an evidence supporting the aforementioned conjecture. 
\begin{thm}\label{T:main}
For any positive integer $d$, let $E:=E_d$ be the elliptic curve defined by \eqref{E:CNEC} and let $P,Q,$ and $O$ be points on $E(\bar{\mathbb{Q}})$ corresponding to $\sqrt{-1}/2,1/4,$ and $1$, respectively, via the isomorphism $E\cong \mathbb{C}/(\mathbb{Z}+\mathbb{Z}\sqrt{-1}).$ If $\xi_1=(Q)+(P+Q)-2(O)$ and $\xi_2=(2Q)-(P)$, then the following identity is true:
\begin{equation}\label{E:main}
\begin{vmatrix}
\mathcal{L}^E_{3,1}(\xi_1) & \mathcal{L}^E_{3,2}(\xi_1) \\ 
\mathcal{L}^E_{3,1}(\xi_2) & \mathcal{L}^E_{3,2}(\xi_2)
\end{vmatrix}=-\frac{43}{2}L(\Sym^2(E),3)=-\frac{43\pi^4}{128}L''(\Sym^2(E),0).
\end{equation}
\end{thm}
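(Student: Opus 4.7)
The plan is to reduce to the case $d = 1$ using functorial invariances, translate the left-hand side of \eqref{E:main} into a determinant of Eisenstein-Kronecker series via the Proposition, and then exploit the complex multiplication of $E_1$ to identify the answer with an explicit combination of Hecke $L$-values attached to $K=\QQ(\sqrt{-1})$.

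First I would note that both sides of \eqref{E:main} are independent of $d$. The functions $\mathcal{L}^E_{3,j}$ depend only on $\tau = \sqrt{-1}$ and on the lattice representatives $\sqrt{-1}/2,\ 1/4,\ \sqrt{-1}/2+1/4,\ 1/2,\ 0$ of $P, Q, P+Q, 2Q, O$, all of which are $d$-independent. On the other side, $E_d$ is the quadratic twist of $E_1$ by the Kronecker character of $\QQ(\sqrt d)$, and since this character is quadratic, $\Sym^2(\rho_{E_d}) \simeq \Sym^2(\rho_{E_1})$ as Galois representations, so $L(\Sym^2 E_d, s) = L(\Sym^2 E_1, s)$. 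We are thus reduced to $d = 1$. Applying the Proposition with $\tau = \sqrt{-1}$ and $\im(\tau) = 1$, the LHS becomes $-\tfrac{2}{\pi^2}\det M$, where $M$ has $j$-th row $(\re K_{1,3}(\sqrt{-1}; \xi_j),\ K_{2,2}(\sqrt{-1}; \xi_j))$. Parameterizing the lattice by $z = n + m\sqrt{-1} \in \ZZ[\sqrt{-1}]$ turns each $K_{a,b}(\sqrt{-1}; u)$ into a character-twisted sum $\sideset{}{'}\sum_z \epsilon_u(z)/(z^a \bar z^b)$, where $\epsilon_u(z) = e^{2\pi i(n\xi - m\eta)}$ factors through $\ZZ[\sqrt{-1}]/4\ZZ[\sqrt{-1}]$ because every relevant $u$ lies in $\tfrac{1}{4}\Lambda$.

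I would then invoke the CM factorization $L(\Sym^2 E_1, s) = L(\psi^2, s)\, L(\chi_{-4}, s-1)$, where $\psi$ is the Hecke Gr\"ossencharakter of $K$ attached to $E_1$ (infinity type $(1,0)$, conductor $(1+\sqrt{-1})^3$) and $\chi_{-4}$ is the nontrivial Dirichlet character modulo $4$. At $s = 3$ this yields $L(\Sym^2 E_1, 3) = G \cdot L(\psi^2, 3)$, with $G = L(\chi_{-4}, 2)$ Catalan's constant. Decomposing each $\epsilon_{\xi_j}$ in the character group of $(\ZZ[\sqrt{-1}]/4\ZZ[\sqrt{-1}])^\times$ and using the $\langle \sqrt{-1}\rangle$-symmetry of the sums (from the CM) together with the degree-zero condition on $\xi_1,\xi_2$, one should find that the first column of $M$ reduces to a rational multiple of $L(\psi^2, 3)$ and the second to a rational multiple of $G$: the parasitic twisted Hecke $L$-values that appear in the expansions cancel out. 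Writing $\det M = A \cdot L(\psi^2,3) \cdot G = A \cdot L(\Sym^2 E,3)$ for an explicit rational $A$, the theorem reduces to showing $A = -\tfrac{43\pi^2}{4}$.

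The main obstacle is the character bookkeeping in the previous paragraph: one must expand each $\epsilon_{\xi_j}$ into characters of $(\ZZ[\sqrt{-1}]/4\ZZ[\sqrt{-1}])^\times$, match each character to a twisted Hecke $L$-value while accounting for the conductor $(1+\sqrt{-1})^3$ of $\psi$, verify that all parasitic contributions cancel, and compute the explicit rational $A$. The emergence of the odd prime $43$ from a $2\times 2$ determinant of small-norm character sums over $(\ZZ[\sqrt{-1}]/4\ZZ[\sqrt{-1}])^\times$ is the combinatorial heart of the proof. Finally, the equivalent formulation $L(\Sym^2 E, 3) = (\pi^4/64)\,L''(\Sym^2 E, 0)$ is a routine gamma-factor computation using the functional equation $\Lambda(\Sym^2 E, s) = \Lambda(\Sym^2 E, 3-s)$ with conductor $C = 64$ for $\Sym^2 E_1$, exploiting that $\Gamma(s/2)\Gamma(s)$ has a double pole at $s = 0$ which is cancelled by the double zero of $L(\Sym^2 E, s)$ there.
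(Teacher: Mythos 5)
Your plan follows the same architecture as the paper --- reduce to one $d$ by twist-invariance of $\Sym^2$ and $d$-independence of the left-hand side, factor $L(\Sym^2(E),s)=L(\psi^2,s)L(\chi_{-4},s-1)$ via the CM Hecke character (this is exactly Theorem~\ref{T:CS}, proved there with $d=2$ via Coates--Schmidt), evaluate the regulator entries as lattice sums at $\tau=\sqrt{-1}$, and finish with the functional equation. But there is a genuine gap: the step that actually constitutes the theorem is never carried out. You defer the evaluation of the four entries of $M$ --- the ``character bookkeeping'' and ``the emergence of the odd prime $43$'' --- as the acknowledged ``main obstacle.'' Everything you do prove only says that the determinant \emph{should} be some multiple of $L(\Sym^2(E),3)$, and even that rests on the purity claim (first column a rational multiple of $L(\psi^2,3)$, second of $L(\chi_{-4},2)$) which you assert but do not establish. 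In the paper this missing content is precisely Lemma~\ref{L:1}: using \eqref{E:L31}--\eqref{E:L32} together with the closed lattice-sum forms \eqref{E:ch1}--\eqref{E:m2} of $L(\chi_{-4},2)$ and $L(g,3)$, each value $\mathcal{L}^E_{3,1}(\xi_j)$, $\mathcal{L}^E_{3,2}(\xi_j)$ is computed by elementary parity splittings of the sums (no multiplicative character decomposition is needed), and the constant $-\tfrac{43}{2}$ then drops out of the $2\times 2$ determinant. Without an analogue of that computation your proposal is a strategy, not a proof.

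Two smaller corrections. First, the twists $\epsilon_u(z)=e^{2\pi i(n\xi-m\eta)}$ are \emph{additive} characters of $\ZZ[\sqrt{-1}]/4\ZZ[\sqrt{-1}]$, not characters of the unit group $(\ZZ[\sqrt{-1}]/4\ZZ[\sqrt{-1}])^\times$; to convert them into Hecke-character twists you would need Gauss-sum expansions, whereas the paper sidesteps this entirely with sublattice (parity) decompositions and the Glasser--Zucker type evaluations. Second, your normalization forces $A$ to contain a factor $\pi^2$ (so it is not rational, since $K_{2,2}$-type sums are rational multiples of $\pi^2 L(\chi_{-4},2)$, not of $L(\chi_{-4},2)$), and with $\mathrm{LHS}=-\tfrac{2}{\pi^2}\det M$ the required value is $A=+\tfrac{43\pi^2}{4}$, not $-\tfrac{43\pi^2}{4}$; also the reduction of the second equality in \eqref{E:main} needs the conductor $64$ of the symmetric square (e.g.\ from the level $16$ of $g$ times the conductor $4$ of $\chi_{-4}$), which you assert but should justify.
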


\begin{rem}
\begin{itemize}
\item[(i)] The points $P$ and $Q$ in Theorem~\ref{T:main} can be written explicitly as $P=[d,0]$ and $Q=\left[-d(1+\sqrt{2}),\sqrt{-(6+4\sqrt{2})d^3}\right]$.

\item[(ii)] Since the symmetric square $L$-function is invariant under a quadratic twist, it suffices to prove Theorem~\ref{T:main} for a particular value of $d$. As the reader will see in Section~\ref{Sec:Gross} and Section~\ref{Sec:proof}, we choose $d=2$.
\end{itemize}
\end{rem}

\section{Some identities involving $\mathcal{L}^E_{3,1}$ and $\mathcal{L}^E_{3,2}$}\label{Sec:LE}

Before proving the main result, we shall state some useful facts about the functions $\mathcal{L}^E_{3,1}$ and $\mathcal{L}^E_{3,2}$. The reader is referred to \cite{Samart} and \cite{Zagier} for further details.

\begin{prop}[{\cite[Cor.~2.3]{Samart}}]
Suppose that $E\cong \mathbb{C}/(\mathbb{Z}+\mathbb{Z}\tau)$ with $\tau\in\mathcal{H}$ and let $q=e^{2\pi i\tau}$ and $x=e^{2\pi i u}$, where $u=\xi\tau+\eta$ and $\xi,\eta\in\mathbb{R}/\mathbb{Z}$. Then the following identities hold:

\begin{align}
\mathcal{L}_{3,1}^E(x)&=\frac{4\im(\tau)^5}{3\pi}\re\left(\sideset{}{'}\sum_{m,n\in\mathbb{Z}}e^{2\pi i(n\xi-m\eta)}\frac{m^2}{|m\tau+n|^6}\right),\label{E:L31}\\
\mathcal{L}_{3,2}^E(x)&=\frac{\im(\tau)^3}{\pi}\left[\sideset{}{'}\sum_{m,n\in\mathbb{Z}}\frac{e^{2\pi i(n\xi-m\eta)}}{|m\tau+n|^4}+2\re\left(\sideset{}{'}\sum_{m,n\in\mathbb{Z}}e^{2\pi i(n\xi-m\eta)}\frac{(m\tau+n)^2}{|m\tau+n|^6}\right)\right]+\frac{\log^3|q|}{120} \label{E:L32}.
\end{align}
\end{prop}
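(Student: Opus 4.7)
The plan is to compute each series by Fourier-expanding the summand in the ``angular'' variable and then applying Poisson summation to the remaining discrete sum. Writing $u = \xi\tau+\eta$ and $q^n x = e^{2\pi i((n+\xi)\tau+\eta)}$, the parameter $\eta$ is angular and $n$ is discrete; I assume $\xi\notin\ZZ$ so that each $q^n x$ lies off the unit circle.

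For \eqref{E:L31}, I would insert into $\sum_{n\in\ZZ}\mathcal{L}_3(q^n x)$ the standard expansion $\Li_m(e^{2\pi iw}) = \sum_{k\ge 1} e^{2\pi ikw}/k^m$, valid when $\im(w)>0$, together with its complex conjugate when $\im(w)<0$. The three summands $\re(\Li_3)$, $-\log|z|\,\re(\Li_2)$, $\tfrac13\log^2|z|\,\re(\Li_1)$ in the definition of $\mathcal{L}_3$ then produce, after Poisson summation in $n$ (and standard $\Gamma$-function evaluations of the resulting Fourier integrals, using $\log|q^n x| = -2\pi(n+\xi)\im(\tau)$), a double sum indexed by $(m,n)\in\ZZ^2\setminus\{0\}$. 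The three weights combine, via the identity $m = ((m\tau+n) - (m\bar\tau+n))/(2i\,\im(\tau))$, into the single factor $m^2/|m\tau+n|^6$ appearing in \eqref{E:L31}; the prefactor $4\im(\tau)^5/(3\pi)$ then falls out from the $\Gamma$-integrals together with the normalization $\log|q| = -2\pi\im(\tau)$.

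For \eqref{E:L32}, the computation for $J_3(q^n x) = \log^2|q^n x|\,\log|1-q^n x|$ is analogous, using $\log|1-e^{2\pi iw}| = -\re\sum_{k\ge 1}e^{2\pi ikw}/k$ (again with the correct expansion on each side of the unit circle). The substitution $x\mapsto x^{-1}$ swaps the two half-sums $\sum_{n\ge 0}J_3(q^n x)$ and $\sum_{n\ge 1}J_3(q^n x^{-1})$, so after recombining and Poisson-summing in $n$ one obtains precisely the $K_{2,2}$ and $2\re(K_{1,3})$ contributions in \eqref{E:L32}, scaled by $\im(\tau)^3/\pi$. The correction $\log^2|x|\log^2|qx^{-1}|/(4\log|q|)$ built into $\mathcal{L}_{3,2}^E$, together with the extracted $(m,n)=(0,0)$ piece and the $k=0$ Fourier mode, collapses to the additive constant $\log^3|q|/120$.

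The main obstacle will be the careful bookkeeping of boundary contributions. Because $\sum_{k\ge 1}z^k/k^m$ converges only on one side of the unit circle, each term of the $n$-sum requires choosing the correct expansion, and the two pieces produced must be Poisson-summed separately and recombined; constants and zero-modes must be tracked at every stage. Likewise the constant $\log^3|q|/120$ can only be pinned down by keeping track of the zero-mode of the Fourier expansion of $\log|1-\cdot|$ and of the prime restriction $(m,n)\ne0$ on the Eisenstein-Kronecker sum. A useful consistency check is that $\mathcal{L}_{3,2}^E(x)$ must be invariant under $x\mapsto qx$ (which follows from $J_3(x^{-1}) = J_3(x) - \log^3|x|$): combined with the linear-in-$\log|q|$ scaling of the Eisenstein-Kronecker series, this invariance forces the coefficient of $\log^3|q|$ to be exactly $1/120$ and serves as a final check on the computation.
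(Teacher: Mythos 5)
The paper offers no proof of this proposition: it is imported verbatim from \cite[Cor.~2.3]{Samart}, where both identities are deduced from Zagier's theorem in \cite{Zagier} on the $q^{\mathbb{Z}}$-average of the modified trilogarithm $\mathcal{L}_3$, and that theorem is proved by exactly the Fourier-expansion-plus-Poisson/Lipschitz-summation argument you outline. So your strategy is the standard one and, carried out with the bookkeeping you acknowledge, it does work: the coefficients $1,-1,\tfrac13$ in $\mathcal{L}_3$ are the values $2^kB_k/k!$ chosen precisely so that the three Lipschitz sums recombine, via $m=((m\tau+n)-(m\bar\tau+n))/(2i\im(\tau))$, into the single term $m^2/|m\tau+n|^6$, and the absolute convergence of the resulting sums (which decay like $|m\tau+n|^{-4}$) justifies the interchanges of summation.

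Two concrete corrections are needed. First, your closing ``consistency check'' is vacuous: $x\mapsto qx$ corresponds to $\xi\mapsto\xi+1$, under which every Eisenstein--Kronecker term $e^{2\pi i(n\xi-m\eta)}$ with $n\in\ZZ$ is already invariant, as is the $x$-independent constant $\log^3|q|/120$; this invariance therefore cannot ``force'' the coefficient $1/120$. That constant must actually be computed, and it arises from the zero mode as $\zeta(-3)=-B_4/4=1/120$ --- that is, the first mechanism you name (tracking the zero mode of $\log|1-\cdot|$ and the prime on the lattice sum) is the correct one, and the purported check should be discarded. Second, the blanket hypothesis $\xi\notin\ZZ$ must be removed at the end: the paper applies \eqref{E:L31} and \eqref{E:L32} at the points $O$, $Q$, and $2Q$, all of which have $\xi=0$. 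Since the Eisenstein--Kronecker series on the right converge absolutely and uniformly in $(\xi,\eta)$ and the left-hand sides are continuous in $u$, the identities do extend by continuity to $\xi\in\ZZ$, but this step has to be stated explicitly.
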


The following result is an immediate consequence of \eqref{E:L31}.

\begin{prop}
Let $E$ be an elliptic curve isomorphic to $\mathbb{C}/(\mathbb{Z}+\mathbb{Z}\tau).$ If $P$ and $Q$ are the points on $E$ corresponding to $\tau/2$ and $1/4$, respectively, via the isomorphism above, then
\begin{equation}
\mathcal{L}^E_{3,1}((Q)+(P+Q))=\frac{1}{8}\mathcal{L}^E_{3,1}(2Q). \label{E:linear}
\end{equation} 
\end{prop}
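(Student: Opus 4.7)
The plan is to prove \eqref{E:linear} by a direct lattice‐sum computation using the expansion \eqref{E:L31}. Writing $P = \tau/2$ and $Q = 1/4$ in the uniformization, the points $Q$, $P+Q$, and $2Q$ correspond to $(\xi,\eta) = (0,1/4)$, $(1/2,1/4)$, and $(0,1/2)$ respectively. Applying \eqref{E:L31} termwise to the divisor $(Q) + (P+Q)$ gives a sum whose character factor collapses to
\begin{equation*}
e^{-2\pi i m/4} + e^{2\pi i(n/2 - m/4)} = e^{-\pi i m/2}\bigl(1 + (-1)^n\bigr).
\end{equation*}
Hence only the terms with even $n$ survive, and after substituting $n = 2k$ the character factor becomes $2e^{-\pi i m/2}$.

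Next I would split the resulting sum over $(m,k) \neq (0,0)$ according to the parity of $m$. If $m = 2m'+1$ is odd, then $e^{-\pi i m/2} = (-i)(-1)^{m'}$ is purely imaginary while $m^2/|m\tau+2k|^6$ is real, so these terms contribute nothing to $\re(\,\cdot\,)$. If $m = 2m'$ is even, then $e^{-\pi i m/2} = (-1)^{m'}$, $m^2 = 4m'^2$, and $|m\tau+2k|^6 = 2^6|m'\tau+k|^6$, so each term acquires a factor $\tfrac{2 \cdot 4}{64} = \tfrac{1}{8}$ relative to the corresponding term with indices $(m',k)$.

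The remaining sum is then exactly
\begin{equation*}
\sideset{}{'}\sum_{m',k \in \mathbb{Z}} (-1)^{m'} \frac{m'^2}{|m'\tau + k|^6},
\end{equation*}
which, by another application of \eqref{E:L31} with $(\xi,\eta) = (0,1/2)$, represents $\mathcal{L}^E_{3,1}(2Q)$ (up to the common prefactor $\tfrac{4\im(\tau)^5}{3\pi}$). Comparing both sides, one obtains the asserted identity
\begin{equation*}
\mathcal{L}^E_{3,1}((Q)+(P+Q)) = \tfrac{1}{8}\mathcal{L}^E_{3,1}(2Q).
\end{equation*}

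There is no serious conceptual obstacle here; the main things to watch are the bookkeeping of the lattice indices after the substitutions $n=2k$ and $m=2m'$, verifying that the primed summation condition $(m,n)\neq(0,0)$ translates correctly into $(m',k)\neq(0,0)$, and confirming that the odd-$m$ piece is indeed purely imaginary so that it drops out upon taking the real part. Everything else is bookkeeping with powers of $2$ and the characters $e^{-\pi i m/2}$.
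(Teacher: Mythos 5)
Your proposal is correct and follows essentially the same route as the paper: apply \eqref{E:L31} to the divisor, collapse the two characters to $e^{-\pi i m/2}(1+(-1)^n)$, use the real part to kill the odd-$m$ terms, and rescale the lattice indices ($n=2k$, $m=2m'$) to recover the sum representing $\mathcal{L}^E_{3,1}(2Q)$ with the factor $\tfrac{2\cdot 4}{64}=\tfrac18$. The only differences are cosmetic (the paper clears the $\tfrac18$ by starting from $8(Q)+8(P+Q)$ and performs the two parity reductions in the opposite order), and your bookkeeping of the primed summation condition and the purely imaginary odd-$m$ terms is accurate.
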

\begin{proof}
Using \eqref{E:L31} and the fact that $e^{\frac{m}{2}\pi i}=i^m$ for any $m\in \mathbb{Z}$, we have 
\begin{align*}
\mathcal{L}^E_{3,1}(8(Q)+8(P+Q))&=\frac{32\im(\tau)^5}{3\pi}\re\left(\sideset{}{'}\sum_{m,n\in\mathbb{Z}}\left(i^m\left(1+(-1)^n\right)\right)\frac{m^2}{|m\tau+n|^6}\right)\\
&=\frac{128\im(\tau)^5}{3\pi}\sideset{}{'}\sum_{m,n\in\mathbb{Z}}\left((-1)^m\left(1+(-1)^n\right)\right)\frac{m^2}{|2m\tau+n|^6}\\
&=\frac{4\im(\tau)^5}{3\pi}\sideset{}{'}\sum_{m,n\in\mathbb{Z}}\frac{(-1)^m m^2}{|m\tau+n|^6}\\
&=\mathcal{L}^E_{3,1}(2Q).
\end{align*}
The last equality follows from the fact that $2Q$ is a point corresponding to $1/2$.
\end{proof}

\section{Gr\"{o}ssencharakters and modular forms}\label{Sec:Gross}

It is a well-known fact that the $L$-function of an elliptic curve over $\mathbb{Q}$ with complex multiplication coincides with that of a Hecke character (a.k.a. a Gr\"{o}ssencharakter)  of an imaginary quadratic field. In this section, we will explicitly construct the Hecke character corresponding to the CM elliptic curve $E:=E_2$. Then we invoke a result of Coates and Schmidt \cite{CS} to obtain an expression of $L(\Sym^2(E),s)$ in terms of a product of $L$-functions attached to a weight $3$ modular form and a Dirichlet character. More precisely, we will prove the following identity:

\begin{thm}\label{T:CS}
Let $E$ be the conductor $64$ defined by $E: y^2=x^3-4x$. Then for any $s\in \mathbb{C}$, we have
$$L(\Sym^2(E),s)=L(g,s)L(\chi_{-4},s-1),$$
where $g(\tau)=q-6q^5+9q^9+\cdots $ is a weight $3$ cusp form of level $16$ and $\chi_{-4}=\left(\frac{-4}{\cdot}\right),$ the Dirichlet character associated to $\mathbb{Q}(\sqrt{-1})$.
\end{thm}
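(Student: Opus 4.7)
The approach exploits the complex multiplication of $E = E_2$ by $\ZZ[\sqrt{-1}]$. By standard CM theory, there is a Hecke Gr\"ossencharacter $\psi$ of $K = \QQ(\sqrt{-1})$ of infinity type $(1,0)$ with $L(E, s) = L(\psi, s)$, and the associated $\ell$-adic Galois representation is $\rho_E \cong \mathrm{Ind}_{G_K}^{G_\QQ}\psi$. First I would write $\psi$ down explicitly: it is unramified outside the prime $\mathfrak{p} = (1+\sqrt{-1})$ above $2$, and on a prime ideal $(\pi)$ over an odd split prime $p$ the value $\psi((\pi))$ equals $u\pi$ for the unique unit $u \in \{\pm 1, \pm\sqrt{-1}\}$ pinned down by a congruence modulo a power of $\mathfrak{p}$. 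The character $\psi_{E_2}$ differs from the character $\psi_{E_1}$ attached to $E_1$ by the twist $\chi_8 \circ N_{K/\QQ}$, where $\chi_8$ is the quadratic Dirichlet character of $\QQ(\sqrt{2})$, encoding the fact that $E_2$ is the quadratic twist of $E_1$ by $2$.

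Next I would apply Mackey's formula to get $\mathrm{Ind}(\psi) \otimes \mathrm{Ind}(\psi) = \mathrm{Ind}(\psi^2) \oplus \mathrm{Ind}(\psi\psi^c)$. Since $\psi$ has infinity type $(1,0)$, the product $\psi\psi^c$ is the norm character $N_{K/\QQ}$, whose induction to $G_\QQ$ is $\epsilon^{-1} \oplus \chi_{-4}\,\epsilon^{-1}$, where $\epsilon$ is the cyclotomic character. As $\wedge^2\rho_E = \det\rho_E = \epsilon^{-1}$, this leaves
\[
\Sym^2\rho_E \;\cong\; \mathrm{Ind}(\psi^2) \;\oplus\; \chi_{-4}\,\epsilon^{-1},
\]
and taking $L$-functions gives $L(\Sym^2 E, s) = L(\psi^2, s)\, L(\chi_{-4}, s-1)$. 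This factorisation is precisely the output of Coates--Schmidt \cite{CS}, which I would quote rather than rederive.

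The remaining task is to identify $L(\psi^2, s)$ with $L(g, s)$. Since $(\chi_8 \circ N_{K/\QQ})^2$ is trivial, $\psi_{E_2}^2 = \psi_{E_1}^2$, so this step is independent of the chosen twist. By Hecke's theta construction,
\[
g(\tau) \;=\; \sum_{\mathfrak{a}\,\subset\,\ZZ[\sqrt{-1}]} \psi^2(\mathfrak{a})\, q^{N(\mathfrak{a})}
\]
is a weight $3$ CM newform (since $\psi^2$ has infinity type $(2,0)$) with Nebentypus $\chi_{-4}$ and level $|d_K|\cdot N(\mathfrak{f}(\psi^2))$. A direct computation shows that squaring $\psi$ drops the conductor from $\mathfrak{p}^3$ (for $E_1$) to $\mathfrak{p}^2 = (2)$, yielding level $4\cdot 4 = 16$. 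I would then verify the initial $q$-expansion $q - 6q^5 + 9q^9 + \cdots$ by computing a few eigenvalues: at the split prime $5 = (2+\sqrt{-1})(2-\sqrt{-1})$, the normalisation forces $\psi((2+\sqrt{-1})) = 1 - 2\sqrt{-1}$, so $a_5(g) = (1-2\sqrt{-1})^2 + (1+2\sqrt{-1})^2 = -6$; at the inert prime $3$, $\psi((3)) = -3$ gives the contribution $9q^9$. The main obstacle will be the $2$-adic bookkeeping, namely determining the unit $u$ precisely, computing $\mathfrak{f}(\psi)$ and $\mathfrak{f}(\psi^2)$, and verifying that the conductor drops so the level is exactly $16$ rather than a higher power of $2$. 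Once those local computations are settled, the remaining identifications are formal consequences of the CM formalism.
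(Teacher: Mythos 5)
Your proposal is correct and follows essentially the same route as the paper: both rest on the Coates--Schmidt factorisation $L(\Sym^2(E),s)=L(\psi^2,s)L(\chi_{-4},s-1)$ together with identifying $L(\psi^2,s)$ with $L(g,s)$ via the Hecke theta series of the squared Gr\"ossencharacter (the paper pins down the character of $E_2$ directly, of modulus $(4)$, by matching its weight $2$ theta series with the level $64$ newform, and quotes a lemma for the weight $3$ identification, which covers the $2$-adic bookkeeping you flag). Your Mackey-formula digression and the twist-by-$\chi_8$ detour through $E_1$ are harmless variants of the same argument.
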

\begin{proof}
Let $K=\mathbb{Q}(\sqrt{-1})$. Then the ring of integer of $K$ is $\mathcal{O}_K=\mathbb{Z}[\sqrt{-1}].$ Let $\Lambda = (4)\subset \mathcal{O}_K$ and let $P(\Lambda)$ be the set of (integral) ideals of $\mathcal{O}_K$ which are relatively prime to $\Lambda.$ Then it is easily seen that each element of $P(\Lambda)$ can be represented uniquely by $(m+ni)$, where $m>0$ is an odd integer and $n$ is an even integer. 
 
Define a map $\phi:P(\Lambda)\rightarrow \mathbb{C}^\times$ by 
\begin{equation*}
\phi((m+ni))= \chi_{-4}(m)(m+ni)=
  \begin{cases}
    m+ni, & \text{if } m\equiv 1 \pmod 4, \\
   -(m+ni), & \text{if }  m\equiv 3 \pmod 4.
  \end{cases}
\end{equation*}
Hence for any $\alpha\in \mathcal{O}_K$ such that $\alpha \equiv 1 \pmod \Lambda$ we have $\phi((\alpha))= \alpha.$ It follows that we can extend $\phi$ multiplicatively to a Hecke character of conductor $\Lambda$. Thus, by \cite[Thm.~1.31]{Ono}, 
\[\mathfrak{f}(\tau)= \sum_{\mathfrak{a}\in P(\Lambda)}\phi(\mathfrak{a}) q^{N(\mathfrak{a})}\]
is a weight $2$ newform of level $64$. Computing the first few terms of $\mathfrak{a}$, we obtain
\[\mathfrak{f}(\tau)=\sum_{\substack{m \in \mathbb{N}\\n\in\mathbb{Z} }}\chi_{-4}(m)m q^{m^2+4n^2}= q+2q^5-3q^9-6q^{13}+\cdots,\]
which is the weight $2$ newform corresponding to $E$ via the modularity theorem.

Let $\phi^2$ be the primitive Hecke character attached to the square of $\phi$. Then $\phi^2$ is a Hecke character of conductor $\Lambda' = (2)$ and satisfies 
\[\phi^2((\alpha))=\alpha^2,\]
for any ideal $(\alpha)$ in $\mathcal{O}_K$ satisfying $\alpha \equiv 1 \pmod{\Lambda'}.$ Moreover, it is known that $L(\phi^2,s) = L(g,s)$ (see, for example, \cite[Lem.~2.3]{Samart1}). Finally, by a result due to Coates and Schmidt \cite[Prop~5.1]{CS}, we have 
\[L(\Sym^2(E),s)=L(\phi^2,s)L(\chi_{-4},s-1)= L(g,s)L(\chi_{-4},s-1).\]
\end{proof}

It has been shown that $L(\chi_{-4},s)$ and $L(g,s)$ have simple lattice sum expressions, which will be particularly useful in the proof of our main result. Let us finish this section by stating these results.

\begin{prop}[{\cite[Lem.~2.3]{Samart1}},{\cite[Sect.~IV]{GZ}}]
Let $s,t\in \mathbb{C}$, where $\re(s)>2$ and $\re(t)>1$. Then we have
\begin{align*}
L(g,s) &= \frac{1}{2} \sideset{}{'}\sum_{m,n\in\mathbb{Z}}\frac{m^2-4n^2}{(m^2+4n^2)^s},\\
L(\chi_{-4},t) &=\frac{1}{4\zeta(t)}\sideset{}{'}\sum_{m,n\in\mathbb{Z}} \frac{1}{(m^2+n^2)^t},\\
&=\frac{1}{2(1-2^{-t}+2^{1-2t})\zeta(t)}\sideset{}{'}\sum_{m,n\in\mathbb{Z}} \frac{1}{(m^2+4n^2)^t},
\end{align*}
where $\zeta(t)$ is the Riemann zeta function.

\end{prop}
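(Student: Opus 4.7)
The plan is to derive all three identities from the Hecke-character framework of Theorem~\ref{T:CS}, together with the classical representation of the Dedekind zeta function $\zeta_{K}$ of $K=\mathbb{Q}(\sqrt{-1})$ as a lattice sum.

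For $L(g,s)$, I would start from the identification $L(g,s)=L(\phi^{2},s)$ and expand the Hecke $L$-series as a sum over integral ideals of $\mathcal{O}_{K}$ coprime to $\Lambda'=(2)$. Every such (principal) ideal has four associate generators in $\mathbb{Z}[\sqrt{-1}]$, of which exactly two, $\pm\alpha$, satisfy $\alpha\equiv 1\pmod{(2)}$; these correspond to pairs $(m,n)\in\mathbb{Z}^{2}$ with $m$ odd and $n$ even. Using $\phi^{2}((\alpha))=\alpha^{2}=(m^{2}-n^{2})+2mn\sqrt{-1}$ and cancelling the imaginary part by the substitution $n\mapsto -n$, one obtains
\[
L(g,s)=\tfrac{1}{2}\sum_{m\text{ odd},\,n\text{ even}}\frac{m^{2}-n^{2}}{(m^{2}+n^{2})^{s}}=\tfrac{1}{2}\sum_{m\text{ odd},\,n\in\mathbb{Z}}\frac{m^{2}-4n^{2}}{(m^{2}+4n^{2})^{s}},
\]
where the second equality re-indexes the even $n$ as $2n$. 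It remains to check that dropping the "$m$ odd" restriction does not change the value: the missing even-$m$ contribution, upon setting $m=2m'$, becomes $\tfrac{1}{2}\cdot 4^{1-s}\sideset{}{'}\sum(m'^{2}-n^{2})(m'^{2}+n^{2})^{-s}$, which is antisymmetric under $(m',n)\leftrightarrow(n,m')$ and hence zero.

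For $L(\chi_{-4},t)$, the first expression is essentially the classical factorisation $\zeta_{K}(t)=\zeta(t)L(\chi_{-4},t)$ combined with $\zeta_{K}(t)=\tfrac{1}{4}\sideset{}{'}\sum(m^{2}+n^{2})^{-t}$, the $\tfrac{1}{4}$ reflecting $|\mathcal{O}_{K}^{\times}|=4$, which I would simply quote. For the second, let $T(t)$ and $U(t)$ denote the two lattice sums appearing in the statement, and split $U(t)$ by the parity of $m$. Setting $m=2m'$ turns the even-$m$ part directly into $4^{-t}T(t)$. The odd-$m$ part, upon re-indexing $n\leftrightarrow 2n$, equals $\sum_{m\text{ odd},\,n\text{ even}}(m^{2}+n^{2})^{-t}$, which by the same unit-count argument equals $2L(\phi_{0},t)$, where $\phi_{0}$ is the trivial Hecke character modulo $(2)$. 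Since $L(\phi_{0},t)$ equals $\zeta_{K}(t)$ with its (ramified) Euler factor at the prime $(1+\sqrt{-1})$ removed, $L(\phi_{0},t)=(1-2^{-t})\zeta(t)L(\chi_{-4},t)=\tfrac{1-2^{-t}}{4}T(t)$, so that combining the two pieces gives
\[
U(t)=\Bigl(4^{-t}+\tfrac{1-2^{-t}}{2}\Bigr)T(t)=\tfrac{1-2^{-t}+2^{1-2t}}{2}\,T(t),
\]
from which the stated formula follows by rearrangement.

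The main obstacle here is bookkeeping rather than any single step of the computation: one must consistently reconcile the four-to-one ratio of lattice points to ideals in $\mathcal{O}_{K}$, the two-to-one ratio after restricting to generators $\equiv 1\pmod{(2)}$, and the omitted Euler factor at the ramified prime above $2$. Once these conventions are pinned down, the otherwise opaque prefactor $1-2^{-t}+2^{1-2t}$ appears transparently as the sum of the "$m$-even" and "$m$-odd" contributions to the twisted lattice sum $U(t)$.
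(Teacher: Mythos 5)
Your proof is correct, but note that the paper itself offers no proof of this proposition: it is imported wholesale from \cite[Lem.~2.3]{Samart1} and \cite[Sect.~IV]{GZ}, so there is no internal argument to compare against. Your derivation supplies the missing details in a self-contained way, and every step checks out: the two-out-of-four count of generators $\pm\alpha$ with $\alpha\equiv 1\pmod{(2)}$ (equivalently $m$ odd, $n$ even) correctly produces the factor $\tfrac12$ in front of $\sum_{m\text{ odd},\,n\text{ even}}(m^2-n^2)(m^2+n^2)^{-s}$, which is exactly \eqref{E:m2}; the vanishing of the even-$m$ complement by the antisymmetry of $(m'^2-n^2)(m'^2+n^2)^{-s}$ under $(m',n)\leftrightarrow(n,m')$ is legitimate in the stated range of absolute convergence $\re(s)>2$; and the decomposition $U(t)=4^{-t}T(t)+\tfrac{1-2^{-t}}{2}T(t)$, with the odd-$m$ piece identified as twice the Dedekind zeta function of $\mathbb{Q}(\sqrt{-1})$ with the Euler factor at the ramified prime $(1+\sqrt{-1})$ removed, reproduces the prefactor $1-2^{-t}+2^{1-2t}$ exactly. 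The only point worth flagging is a mild logical dependency: you take $L(g,s)=L(\phi^2,s)$ as given, which is the same fact the paper quotes from \cite[Lem.~2.3]{Samart1} inside the proof of Theorem~\ref{T:CS}; what your argument genuinely proves is that the lattice sum equals $L(\phi^2,s)$, and the identification of that Hecke $L$-series with the $L$-series of the specific weight-$3$ newform $g=q-6q^5+9q^9+\cdots$ of level $16$ still rests on the cited theta-series identity. With that caveat made explicit, your write-up is a complete and correct replacement for the external citation.
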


\begin{cor}
The following formulas are true:
\begin{align}
L(\chi_{-4},2)&= \frac{3}{2\pi^2} \sideset{}{'}\sum_{m,n\in\mathbb{Z}} \frac{1}{(m^2+n^2)^2},\label{E:ch1}\\
&=\frac{24}{7\pi^2} \sideset{}{'}\sum_{\substack{m \text{ even}\\ n\in \mathbb{Z}}} \frac{1}{(m^2+n^2)^2},\label{E:ch2}\\
L(g,3) &= \frac{1}{2} \sideset{}{'}\sum_{\substack{m \in \mathbb{Z}\\n \text{ even}}}\frac{m^2-n^2}{(m^2+n^2)^3}, \label{E:m1}\\
&=\frac{1}{2}\sum_{\substack{m \text{ odd}\\n \text{ even}}}\frac{m^2-n^2}{(m^2+n^2)^3}. \label{E:m2}
\end{align}
\end{cor}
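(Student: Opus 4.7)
Each of the four identities is a direct consequence of one of the three lattice-sum formulas stated in the preceding proposition, combined with $\zeta(2)=\pi^2/6$ and at most a linear change of summation variables. I do not expect any serious obstacle; the only step that is not an immediate substitution is the derivation of \eqref{E:m2}, which needs a short symmetry argument.

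First, for \eqref{E:ch1} I would set $t=2$ in the expression $L(\chi_{-4},t)=\tfrac{1}{4\zeta(t)}\sideset{}{'}\sum 1/(m^2+n^2)^t$; using $\zeta(2)=\pi^2/6$ collapses the prefactor to $3/(2\pi^2)$, and the sum is already in the desired form. For \eqref{E:ch2} I would specialize the second formula for $L(\chi_{-4},t)$ at $t=2$: a short calculation gives $1-2^{-2}+2^{1-4}=7/8$, so the prefactor becomes $24/(7\pi^2)$, and the sum $\sideset{}{'}\sum 1/(m^2+4n^2)^2$ is re-expressed in the stated shape via the bijection $(m,n)\mapsto(2n,m)$ between $\ZZ^2\setminus\{(0,0)\}$ and $(2\ZZ\times\ZZ)\setminus\{(0,0)\}$, which trades the factor of $4$ for an evenness condition on the first index.

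For \eqref{E:m1} I would set $s=3$ in $L(g,s)=\tfrac{1}{2}\sideset{}{'}\sum(m^2-4n^2)/(m^2+4n^2)^s$ and reindex $n\mapsto n/2$, which sends $4n^2$ to $n^2$ at the cost of restricting the new $n$ to even integers. The resulting sum is absolutely convergent because the summand is $O(1/(m^2+n^2)^2)$, so no issue with the order of summation arises.

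The only step that is not purely mechanical is \eqref{E:m2}. Starting from \eqref{E:m1}, I would split the sum according to the parity of $m$. Under the substitution $m=2m'$, $n=2n'$, the $m$-even contribution becomes $\tfrac{1}{16}\sideset{}{'}\sum_{m',n'\in\ZZ}(m'^2-n'^2)/(m'^2+n'^2)^3$, which vanishes by the antisymmetry of the summand under $m'\leftrightarrow n'$. Only the $m$-odd, $n$-even portion therefore survives; since the origin $(m,n)=(0,0)$ is excluded automatically by the parity condition, the prime on $\sum$ may be dropped, yielding \eqref{E:m2}.
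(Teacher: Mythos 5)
Your proposal is correct, and it is exactly the routine specialization the paper intends: the corollary is stated without proof as an immediate consequence of the preceding proposition, obtained by setting $s=3$, $t=2$, using $\zeta(2)=\pi^2/6$, and reindexing $(m,n)\mapsto(2n,m)$ (resp.\ $n\mapsto n/2$) to convert $m^2+4n^2$ into an evenness condition. Your extra parity-splitting argument for \eqref{E:m2}, with the even-even part vanishing by antisymmetry under $m'\leftrightarrow n'$ of an absolutely convergent sum, is the right justification for the one step that is not a pure substitution.
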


\section{Proof of the main result}\label{Sec:proof}
We precede the proof of \eqref{E:main} by a lemma consisting of a series of identities relating values of $\mathcal{L}^E_{3,1}$ and $\mathcal{L}^E_{3,2}$ to modular and Dirichlet $L$-values. 
\begin{lem}\label{L:1}
With the same assumption in Theorem~\ref{T:main}, the following identities are true:
\begin{align}
\mathcal{L}^E_{3,1}((Q)+(P+Q))&= -\frac{1}{3\pi} L(g,3)-\frac{\pi}{144}L(\chi_{-4},2),\label{L1}\\
\mathcal{L}^E_{3,1}(O) &= \frac{4\pi}{9}L(\chi_{-4},2), \label{L2}\\
\mathcal{L}^E_{3,1}((2Q)-(P)) &= -\frac{16}{3\pi}L(g,3), \label{L3}\\
\mathcal{L}^E_{3,2}((2Q)-(P)) &= \frac{16}{\pi}L(g,3), \label{L4}\\
\mathcal{L}^E_{3,2}((Q)+(P+Q)-2(O)) &= \frac{1}{\pi}L(g,3)-\frac{43\pi}{32}L(\chi_{-4},2). \label{L5}
\end{align}
\end{lem}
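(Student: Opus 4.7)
My plan is to evaluate each identity by substituting $\tau=i$ into the lattice sum formulas \eqref{E:L31} and \eqref{E:L32} (so $\im(\tau)=1$, $|m\tau+n|^2=m^2+n^2$, and $\log^3|q|/120=-\pi^3/15$), and then massaging the resulting double series into the explicit lattice representations \eqref{E:ch1}--\eqref{E:m2} of $L(\chi_{-4},2)$ and $L(g,3)$. The relevant phases $e^{2\pi i(n\xi-m\eta)}$ are $1$ for $O\leftrightarrow 0$, $(-1)^n$ for $P\leftrightarrow i/2$, $(-i)^m$ for $Q\leftrightarrow 1/4$, $(-1)^m$ for $2Q\leftrightarrow 1/2$, and $(-1)^n(-i)^m$ for $P+Q\leftrightarrow i/2+1/4$. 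The additive constant $-\pi^3/15$ in \eqref{E:L32} cancels on every degree-zero divisor, so it plays no role in \eqref{L4} and \eqref{L5}. Throughout I will use the $(m,n)\mapsto(-m,-n)$ conjugation symmetry to justify replacing phases by their real parts, and the $m\leftrightarrow n$ symmetry on the underlying lattice to produce cancellations.

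Identity \eqref{L2} is almost immediate: the $m\leftrightarrow n$ symmetry gives $\sum' m^2/(m^2+n^2)^3=\tfrac12\sum'1/(m^2+n^2)^2=\tfrac{\pi^2}{3}L(\chi_{-4},2)$ by \eqref{E:ch1}. For \eqref{L3}, I would rewrite the sum as $\frac{4}{3\pi}\sum'(-1)^m(m^2-n^2)/(m^2+n^2)^3$ after swapping variables in the $(-1)^n$-term, and then partition by parities of $m$ and $n$: the even-even and odd-odd pieces vanish by $m'\leftrightarrow n'$ symmetry after rescaling, while the odd-even and even-odd pieces each contribute $-2L(g,3)$ by \eqref{E:m2} (using that swapping $m\leftrightarrow n$ sends even--odd to odd--even and negates the numerator). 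The same parity analysis handles \eqref{L4}: the $1/(m^2+n^2)^2$ sum vanishes by $m\leftrightarrow n$ symmetry, while $\re((mi+n)^2)=n^2-m^2$ feeds back into the same $\sum'(-1)^m(m^2-n^2)/(m^2+n^2)^3=-4L(g,3)$, producing the coefficient $16/\pi$.

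The two remaining identities \eqref{L1} and \eqref{L5} are the main obstacle, since the $(-i)^m$ factor forces one to track residues of $m$ modulo $4$ and to separate real from imaginary parts. In both, the factor $1+(-1)^n$ kills odd $n$, while $(-i)^m+\overline{(-i)^m}=2\re((-i)^m)$ kills odd $m$, so after the change of variables $(m,n)=(2m',2n')$ only an alternating sum over $(-1)^{m'}$ survives, scaled by $1/16$. Identity \eqref{L1} then reduces to the evaluation of $A_{-}:=\sum'(-1)^m m^2/(m^2+n^2)^3$, which I would obtain by coupling it with $B_{-}:=\sum'(-1)^m n^2/(m^2+n^2)^3$: their sum equals $\sum'(-1)^m/(m^2+n^2)^2$, computed via \eqref{E:ch1}--\eqref{E:ch2} by subtracting the odd-$m$ piece from the even-$m$ piece and evaluating to $-\tfrac{\pi^2}{12}L(\chi_{-4},2)$; and their difference $A_{-}-B_{-}=-4L(g,3)$ is the same sum I already used for \eqref{L3}. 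For \eqref{L5}, the $1/(m^2+n^2)^2$ piece of $\mathcal{L}^E_{3,2}$ contributes $-2\cdot\frac{2\pi^2}{3}L(\chi_{-4},2)+\frac{1}{8}\cdot(-\frac{\pi^2}{12})L(\chi_{-4},2)=-\frac{43\pi^2}{32}L(\chi_{-4},2)$ via the arithmetic $\tfrac{4}{3}+\tfrac{1}{96}=\tfrac{43}{32}$, while the $(mi+n)^2$ piece discards the odd-$m$ contributions because the surviving $\im((-i)^m)\cdot 2mn$ terms are odd in $n$, and then reduces to a $1/16$-scaled copy of $\sum'(-1)^m(m^2-n^2)/(m^2+n^2)^3$, yielding the $L(g,3)/\pi$ term. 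The most delicate point is verifying that odd-$m$ imaginary contributions vanish by $n\mapsto -n$ symmetry rather than combining in unexpected ways, and that the four-fold parity decompositions recombine correctly to produce the subtle coefficient $43/32$.
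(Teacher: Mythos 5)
Your proposal is correct and follows essentially the same route as the paper: direct evaluation of the lattice sums \eqref{E:L31}--\eqref{E:L32} at $\tau=i$ using parity decompositions and the representations \eqref{E:ch1}--\eqref{E:m2}, with your $A_-\pm B_-$ pairing playing the role of the paper's use of \eqref{E:linear} together with \eqref{E1}--\eqref{E2}, and with the bonus that you actually spell out the \eqref{L5} computation the paper dismisses as ``tedious manipulations.'' One small caution: the restriction to even $(m,n)$ carries a net factor $\tfrac{2}{16}=\tfrac{1}{8}$ (the $1/16$ lattice rescaling times the $2$ coming from $1+(-1)^n$), which your explicit coefficients (e.g.\ $\tfrac{1}{8}\cdot\bigl(-\tfrac{\pi^2}{12}\bigr)$ and $\tfrac{4}{3}+\tfrac{1}{96}=\tfrac{43}{32}$) correctly incorporate even though the prose twice says ``$1/16$.''
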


\begin{proof}
Note first that, by symmetry, 

\begin{equation*}
\sideset{}{'}\sum_{m,n\in\mathbb{Z}} \frac{n^2-m^2}{(m^2+n^2)^3}=0.
\end{equation*}
Therefore, by \eqref{E:L31} and \eqref{E:ch1}, we have 
\begin{align*}
\mathcal{L}^E_{3,1}(O) &= \frac{4}{3\pi}\re\left(\sideset{}{'}\sum_{m,n\in\mathbb{Z}}e^{-2\pi im}\frac{m^2}{|n+mi|^6}\right)\\
&= \frac{4}{3\pi}\sideset{}{'}\sum_{m,n\in\mathbb{Z}}\frac{m^2}{(m^2+n^2)^3}\\
&= \frac{2}{3\pi}\sideset{}{'}\sum_{m,n\in\mathbb{Z}}\left(\frac{1}{(m^2+n^2)^2}-\frac{n^2-m^2}{(m^2+n^2)^3}\right)\\
&= \frac{2}{3\pi}\sideset{}{'}\sum_{m,n\in\mathbb{Z}}\frac{1}{(m^2+n^2)^2}\\
&= \frac{4\pi}{9}L(\chi_{-4},2),
\end{align*}
which yields \eqref{L2}.

Next, using \eqref{E:L32} and \eqref{E:m2}, we obtain
\begin{align*}
\mathcal{L}^E_{3,2}((2Q)-(P)) &= \frac{1}{\pi}\left(\sideset{}{'}\sum_{m,n\in\mathbb{Z}}\frac{(-1)^m-(-1)^n}{(m^2+n^2)^2}+2\sideset{}{'}\sum_{m,n\in\mathbb{Z}}((-1)^m-(-1)^n)\frac{n^2-m^2}{(m^2+n^2)^3}\right)\\
&= \frac{1}{\pi}\Bigg(-2\sideset{}{'}\sum_{\substack{m \text{ odd}\\ n\text{ even}}}\frac{1}{(m^2+n^2)^2}-4\sideset{}{'}\sum_{\substack{m \text{ odd}\\ n\text{ even}}}\frac{n^2-m^2}{(m^2+n^2)^3}+2\sideset{}{'}\sum_{\substack{m \text{ even}\\ n\text{ odd}}}\frac{1}{(m^2+n^2)^2}\\
&\qquad\qquad +4\sideset{}{'}\sum_{\substack{m \text{ even}\\ n\text{ odd}}}\frac{n^2-m^2}{(m^2+n^2)^3} \Bigg)\\
&= \frac{8}{\pi}\sideset{}{'}\sum_{\substack{m \text{ even}\\ n\text{ odd}}}\frac{n^2-m^2}{(m^2+n^2)^3}\\
&= \frac{16}{\pi}L(g,3),
\end{align*} 
which is \eqref{L4}. \\

On the other hand, it is easily seen by symmetry that $\displaystyle\sideset{}{'}\sum_{m,n\in\mathbb{Z}}\frac{(-1)^m-(-1)^n}{(m^2+n^2)^2}$ vanishes, so we have 
\begin{align*}
\mathcal{L}^E_{3,2}((2Q)-(P)) &=\frac{2}{\pi}\sideset{}{'}\sum_{m,n\in\mathbb{Z}}((-1)^m-(-1)^n)\frac{n^2-m^2}{(m^2+n^2)^3}\\
&= -\frac{4}{\pi}\sideset{}{'}\sum_{m,n\in\mathbb{Z}}((-1)^m-(-1)^n)\frac{m^2}{(m^2+n^2)^3}\\
&= -3\mathcal{L}^E_{3,1}((2Q)-(P)).
\end{align*}
Together with \eqref{L4}, this implies \eqref{L3}. 

To establish \eqref{L1}, we first employ \eqref{E:ch1} and \eqref{E:ch2} to deduce that
\begin{equation*}
\frac{1}{2}\sideset{}{'}\sum_{m,n\in\mathbb{Z}}\frac{1}{(m^2+n^2)^2}-\sideset{}{'}\sum_{\substack{m \text{ even}\\ n\text{ even}}}\frac{1}{(m^2+n^2)^2}=\frac{7}{16}\sideset{}{'}\sum_{m,n\in\mathbb{Z}}\frac{1}{(m^2+n^2)^2} = \sideset{}{'}\sum_{\substack{m \text{ even}\\ n\in\mathbb{Z}}}\frac{1}{(m^2+n^2)^2}.
\end{equation*}
Therefore, we have
\begin{equation}\label{E1}
\begin{aligned}
\sideset{}{'}\sum_{\substack{m \text{ even}\\ n\text{ even}}}\frac{1}{(m^2+n^2)^2}+\sideset{}{'}\sum_{\substack{m \text{ even}\\ n\in\mathbb{Z}}}\frac{1}{(m^2+n^2)^2} &= \frac{1}{2}\sideset{}{'}\sum_{m,n\in\mathbb{Z}}\frac{1}{(m^2+n^2)^2}\\
&= \frac{1}{2}\sideset{}{'}\sum_{m,n\in\mathbb{Z}}\frac{m^2+n^2}{(m^2+n^2)^3}\\
&= \sideset{}{'}\sum_{m,n\in\mathbb{Z}}\frac{m^2}{(m^2+n^2)^3}.
\end{aligned}
\end{equation}

Using \eqref{E1}, we then obtain 
\begin{equation}\label{E2}
\begin{aligned}
\sideset{}{'}\sum_{\substack{m \text{ even}\\ n\text{ even}}}\frac{1}{(m^2+n^2)^2}+\sideset{}{'}\sum_{\substack{m \text{ even}\\ n\in\mathbb{Z}}}\frac{n^2}{(m^2+n^2)^3} &= \sideset{}{'}\sum_{\substack{m \text{ even}\\ n\text{ even}}}\frac{1}{(m^2+n^2)^2}+\sideset{}{'}\sum_{\substack{m \text{ even}\\ n\in\mathbb{Z}}}\left(\frac{1}{(m^2+n^2)^2}-\frac{m^2}{(m^2+n^2)^3}\right)\\
&= \sideset{}{'}\sum_{m,n\in\mathbb{Z}}\frac{m^2}{(m^2+n^2)^3} - \sideset{}{'}\sum_{\substack{m \text{ even}\\ n\in\mathbb{Z}}}\frac{m^2}{(m^2+n^2)^3}\\
&=  \sideset{}{'}\sum_{\substack{m \text{ odd}\\ n\in\mathbb{Z}}}\frac{m^2}{(m^2+n^2)^3}.
\end{aligned}
\end{equation}

By \eqref{E:linear}, \eqref{E:L31} and \eqref{E2}, one sees that
\begin{align*}
\mathcal{L}^E_{3,1}((Q)+(P+Q)) &= \frac{1}{6\pi}\sideset{}{'}\sum_{m,n\in\mathbb{Z}}\frac{(-1)^m m^2}{(m^2+n^2)^3}\\
&= \frac{1}{6\pi}\sideset{}{'}\sum_{\substack{m \text{ even}\\ n\in\mathbb{Z}}}\frac{ m^2}{(m^2+n^2)^3}-\frac{1}{6\pi}\sideset{}{'}\sum_{\substack{m \text{ odd}\\ n\in\mathbb{Z}}}\frac{m^2}{(m^2+n^2)^3}\\
&=  \frac{1}{6\pi}\sideset{}{'}\sum_{\substack{m \text{ even}\\ n\in\mathbb{Z}}}\frac{ m^2}{(m^2+n^2)^3} -\frac{1}{6\pi}\sideset{}{'}\sum_{\substack{m \text{ even}\\ n\in\mathbb{Z}}}\frac{n^2}{(m^2+n^2)^3}-\frac{1}{96\pi}\sideset{}{'}\sum_{m,n\in\mathbb{Z}}\frac{1}{(m^2+n^2)^2}\\
&= -\frac{1}{3\pi}L(g,3) -\frac{\pi}{144}L(\chi_{-4},2),
\end{align*}
where the last equality follows from \eqref{E:m1} and \eqref{E:ch1}.

Finally, we prove \eqref{L5} using \eqref{E:L32}, \eqref{E:ch1}, \eqref{E:m1}, and some tedious manipulations.

\end{proof}

Theorem \ref{T:main} now easily follows from Lemma \ref{L:1} and Theorem \ref{T:CS}.
\begin{proof}[Proof of Theorem \ref{T:main}]
Let $\alpha = L(g,3)$ and $\beta = L(\chi_{-4},2)$. By \eqref{L1}-\eqref{L5} and Theorem \ref{T:CS}, we have 
\begin{align*}
\begin{vmatrix}
\mathcal{L}^E_{3,1}(\xi_1) & \mathcal{L}^E_{3,2}(\xi_1) \\ 
\mathcal{L}^E_{3,1}(\xi_2) & \mathcal{L}^E_{3,2}(\xi_2)
\end{vmatrix} &= \mathcal{L}^E_{3,1}(\xi_1)\mathcal{L}^E_{3,2}(\xi_2)-\mathcal{L}^E_{3,1}(\xi_2)\mathcal{L}^E_{3,2}(\xi_1) \\
&= \frac{16}{\pi}\alpha\left(-\frac{1}{3\pi}\alpha-\frac{43\pi}{48}\beta\right)+\frac{16}{3\pi}\alpha\left(\frac{1}{\pi}\alpha-\frac{43\pi}{32}\beta\right)\\
&= -\frac{43}{2}\alpha\beta \\
&= -\frac{43}{2}L(\Sym^2(E),3)
\end{align*}

The second equality in \eqref{E:main} follows from the functional equation for the symmetric square $L$-function.
\end{proof}


\bigskip

\noindent{\bf Acknowledgements} The original motivation for this note was to understand the possible relationship between Mahler measures of multi-variate polynomials and special $L$-values, which was the main theme of the author's Ph.D. thesis. The author would like to thank his adviser, Matt Papanikolas, for his encouragement and support. The author is also grateful to J\"{o}rn Steuding for pointing out possible extension of the main result to the congruent number elliptic curves.


\end{document}